\newcommand{\vp}{\varphi}
\newcommand{\dx}{\dot{x}}
\newcommand{\ddx}{\ddot{x}}
\newcommand{\ve}{\varepsilon}
\newcommand{\eb}{e^{\pm\!\int_t\!b d\tau}}
\newcommand{\ebb}{e^{\pm\!\int_t\!b_\pm d\tau}}
\newcommand{\B}{\bar{B}}
\newcommand{\dt}{\boldsymbol{\cdot}}
\def\smallint{\begingroup\textstyle \int\endgroup}
\newcommand{\p}[1] {\partial_{#1}}
\title{New nonlocal constants and first integrals for nonlinear Jacobi-type equations}
\author{Mattia Scomparin}
\begin{document}
\maketitle


\section{Introduction}
\label{sec:intro}

We consider Lagrangians $L=L(t,x,\dx)$ whose \textit{Euler-Lagrange} equations of motion are Ordinary Differential Equations (ODEs) like this
\begin{equation}\label{eq:E-L}
\big(\partial_{\dx} L(t,x,\dx)\big)^{\dt}-\partial_{x} L(t,x,\dx)=0\,,
\end{equation}
with $x=x(t)$ and $\dx\equiv dx/dt$. The $\partial$ symbol stands for partial derivative, e.g. $\partial_{\dx}=\partial/\partial\dx$, whereas the upper dot is the total derivative with respect to $t$.

In Ref. \cite{Gorni2021} Gorni and Zampieri provided a theory to generate \textit{nonlocal constants} of motion for second order ODE Lagrangians using the concept of perturbed motions.  
Subsequently, in Ref. \cite{10077_31215}, the author gave a recipe to generate nonlocal constants for ODEs of any order. The paper \cite{GORNI2022100262} is a survey on the whole theory with a few sample applications.

\begin{theorem}\label{teo:nonloconst}
Let $t\rightarrow x(t)$ be a solution to the Euler-Lagrange equation \eqref{eq:E-L}, and let $x_\ve(t)$, $\ve\in\mathbb{R}$, be a smooth family of perturbed motions, such that $x_0(t)=x$. Then, the following function is constant:
\begin{equation}\label{eq:nonlocGZ}
I\equiv \partial_{\dx} L\big(t,x(t),\dx(t)\big)\cdot \partial_\ve x_\ve(t)\big\rvert_{\ve=0}-\int_{t_0}^t 
\partial_\ve L\big(s,x_\ve(s),\dx_\ve(s)\big)\big\rvert_{\ve=0}\,ds\,.
\end{equation}
\end{theorem}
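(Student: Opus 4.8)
The plan is to show directly that $dI/dt=0$ along the given solution $x(t)$. I would first introduce the shorthand $p(t)\equiv\partial_{\dx} L\big(t,x(t),\dx(t)\big)$ for the conjugate momentum and $\eta(t)\equiv\partial_\ve x_\ve(t)\big\rvert_{\ve=0}$ for the infinitesimal variation, so that the first summand of $I$ is just $p(t)\cdot\eta(t)$. Differentiating $I$ in $t$ and applying the fundamental theorem of calculus to the integral term (legitimate since the integrand is continuous in $s$), I obtain
\[
\frac{dI}{dt}=\dot p(t)\cdot\eta(t)+p(t)\cdot\dot\eta(t)-\partial_\ve L\big(t,x_\ve(t),\dx_\ve(t)\big)\big\rvert_{\ve=0}\,.
\]

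Next I would expand the last term by the chain rule. Since $L$ depends on $\ve$ only through the slots $x_\ve$ and $\dx_\ve$, evaluating at $\ve=0$ gives
\[
\partial_\ve L\big\rvert_{\ve=0}=\partial_x L(t,x,\dx)\cdot\eta(t)+\partial_{\dx} L(t,x,\dx)\cdot\dot\eta(t)\,,
\]
where I have used the interchange $\partial_\ve\dx_\ve\big\rvert_{\ve=0}=\big(\partial_\ve x_\ve\big\rvert_{\ve=0}\big)^{\dt}=\dot\eta(t)$ of the $\ve$- and $t$-derivatives. Substituting this and recalling $p=\partial_{\dx} L$, the two terms containing $\dot\eta$ cancel exactly, leaving
\[
\frac{dI}{dt}=\big[\dot p(t)-\partial_x L(t,x,\dx)\big]\cdot\eta(t)=\big[(\partial_{\dx} L)^{\dt}-\partial_x L\big]\cdot\eta(t)\,.
\]
The bracket vanishes identically because $x(t)$ solves the Euler-Lagrange equation \eqref{eq:E-L}, whence $dI/dt=0$ and $I$ is constant.

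The computation is essentially mechanical, and the cancellation structure above is the whole content of the argument. The one point demanding care, and where the smoothness hypothesis on the family $x_\ve$ genuinely enters, is the interchange of the mixed partial derivatives $\partial_\ve\partial_t x_\ve=\partial_t\partial_\ve x_\ve$; I would justify it by Schwarz's theorem, invoking that $(t,\ve)\mapsto x_\ve(t)$ is smooth so that the order of differentiation is immaterial. No further subtlety arises.
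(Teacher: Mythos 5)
Your proof is correct. The paper itself states Theorem \ref{teo:nonloconst} without proof, quoting it as background from Refs.\ \cite{Gorni2021,10077_31215}, and your argument --- differentiate $I$, apply the fundamental theorem of calculus to the integral term, expand $\partial_\ve L\rvert_{\ve=0}$ by the chain rule with the interchange $\partial_\ve \dx_\ve\rvert_{\ve=0}=(\partial_\ve x_\ve\rvert_{\ve=0})^{\dt}$, cancel the $\dot\eta$ terms, and invoke the Euler--Lagrange equation --- is exactly the standard proof given in those references, including the correct identification of Schwarz's theorem as the only point where smoothness of the family is genuinely used.
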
 
Expression \eqref{eq:nonlocGZ} means that the value of $I$ in $t$ depends not only on the state $(t,x(t),\dx(t))$, but also on the whole history of the motion from its beginning at $t_0$. Nevertheless, as described in Refs. \cite{10077_31215,Gorni2021}, one can find particular cases for which Theorem \ref{teo:nonloconst} yields true \emph{first integrals} for equation \eqref{eq:E-L}, in the sense of point functions $\mathcal{I}=\mathcal{I}(t,x(t),\dx(t))$ that are constant along solutions.

Generally, nonlocal constants and first integrals are precious tools to construct solutions of differential equations. In the case of linear ODE, there are a number of well defined methods to find nonlocal constants and first integrals. However, the same cannot be said for the nonlinear case, where the direct computation of constants is in general an open problem and only indirect techniques are available.

A general parametrization usually employed to study second-order nonlinear ODEs is the so-called \emph{Jacobi-type} form (or, \emph{Jacobi equation}). It is a second-degree polynomial in $\dx$ like this
\begin{equation}\label{eq:J}
\ddx+\tfrac{1}{2}\partial_x\vp\,\dx^2+\partial_t\vp \,\dx+B=0\,,
\end{equation}
where the coefficients $\vp=\vp(t,x)$ and $B=B(t,x)$ are analytical in $t$ and $x$.
By taking the parameters of Eq. \eqref{eq:J} appropriately, one can reproduce a great variety of nonlinear ODEs as a specific case.
For example, with \eqref{eq:J} one can easily recover many of the second-order equations of the \emph{Painlev\'{e}-Gambier} classification (see, e.g. Refs. \cite{10.1007/BF02419020, 10.1007/BF02393211,Dambrosi2021}).
These equations are  attracting much attention during last years, since some problems related to their solutions, the \emph{Painlev\'{e} transcendents}, are already under discussion  \cite{2019xdf}.

The background to the present work is as follows:
\begin{itemize}
\item In Ref.  \cite{GUHA20103247}, working directly on the ODE \eqref{eq:J}, Guha \emph{et al.}  employed  \emph{generalized Sundman transformations}  to derive a procedure to find constants for some of the \emph{Painlev\'{e}-Gambier} equations. In particular, we notice that part of their constants are nonlocal, due to the presence of an exponential factor whose argument is an integral of a function of $t$ and $x$ only.
More precisely, their nonlocal constants look like $f(t,\dx,x)\,e^{\int_t\!g(\tau,x(\tau)) d\tau}$.
\item In Ref.  \cite{Nucci2008UsingAO}, Nucci and Tamizhman presented a method devised by Jacobi to derive Lagrangians of any second-order ODE. Furthermore, among the examples provided, they demonstrated that the Jacobi equation \eqref{eq:J}  can be derived as the Euler-Lagrange equation \eqref{eq:E-L} for
$L(t,x,\dx)\equiv\tfrac{1}{2}e^{\vp}\dx^2+\delta_1\dx+\delta_2$
where $\delta_{1,2}=\delta_{1,2}(t,x)$ satisfy $\partial_t\delta_1-\partial_x\delta_2=e^\vp B$.
\end{itemize}

Motivated by the above, in this work we approach Theorem \ref{teo:nonloconst} as a new tool to provide explicit nonlocal constants and first integrals for the nonlinear Jacobi ODE \eqref{eq:J}. The paper is organized as follows. In Section \ref{sec:aut}, Theorem \ref{teo:xxx}  provides first integrals for the autonomous case  $\p{t}\vp=\p{t}B=0$. In Subsection \ref{eq:pt0}, Theorem \ref{teo:xxffx} provides nonlocal constants when $\p{t}\vp=0$ and $\p{t}B\ne 0$, whereas  in Subsection \ref{eq:lowe} our Theorem \ref{teo:xxkjkffx} provides first integrals and nonlocal constants when $\p{t}\vp\ne0$ and $\p{t}B\ne0$.
As far as we know, such theorems appear to be completely new to the literature. 
Similarly to Guha \emph{et al.}, our constants exhibit nonlocal contributes that depend on $t$ and $x$ only.

Finally, in Subsections \ref{sec:oiu}, \ref{sec:lkjhg}, and \ref{eq:koprwrt} we deal with some neat applications of our theorems. Among all, simple computations prove that our single formula \eqref{eq:r2} fully recovers the results obtained by Guha \emph{et al.} for the Painlev\'{e}-Gambier case.

Since nonlinear ODEs play a prominent role in many applied fields (computational science and engineering modeling, fluid dynamics, finance, and quantum mechanics, $...$), the main results of our work clearly indicates the powerful nature of nonlocal constants, and point out the necessity for a more extensive study of their applications.

\section{Autonomous Jacobi equation}
\label{sec:aut}

Given $\vp=\vp(x)$ and $B=B(x)$, in this section we study nonlocal constants and first integrals for the autonomous Jacobi equation
\begin{equation}\label{eq:reom}
\ddx+\tfrac{1}{2}\vp'\dx^2+B=0\,.
\end{equation}
Hereinafter, the prime symbol will stand for total derivatives with respect to $x$. 

In Ref.  \cite{Nucci2008UsingAO}, Nucci and Tamizhman present a method devised by Jacobi to find Lagrangians for any second-order ODE. In this respect, equation \eqref{eq:reom} 
can be derived as the Euler-Lagrange equation \eqref{eq:E-L} for
\begin{equation}\label{eq:rL}
\mathcal{L}(x,\dx)\equiv\tfrac{1}{2}e^{\vp}\dx^2+\delta_2\,,
\end{equation}
with $\delta_2=\delta_2(x)$ satisfying
\begin{equation}\label{eq:constrL}
\delta_2'=-e^{\vp}B\,.
\end{equation}

Let us consider expression \eqref{eq:nonlocGZ}, that evaluated on the  Lagrangian \eqref{eq:rL} gives
\begin{equation}\label{eq:nn}
I=e^\vp \dx \cdot \partial_\ve x_\ve\big\rvert_{\ve=0}
-\int_{t_0}^t \Big\{\big(\tfrac{1}{2}\vp' e^\vp\dx^2+\delta_2'\big)\cdot \partial_\ve x_\ve\big\rvert_{\ve=0}\,+e^\vp \dx\cdot \partial_\ve \dx_\ve\big\rvert_{\ve=0}\Big\}\, ds\,.
\end{equation}

In literature, some results for the time-independent Painlev\'{e}-Gambier case present constants which are nonlocal due to the presence of an integral in the argument of exponential factors (see, e.g. Ref. \cite{GUHA20103247}). Inspired by this fact, it is quite natural for us to consider as perturbed motion the $x$-shift family 
\begin{equation}\label{eq:pert}
x_\ve^\pm=x+\ve a\eb\,,
\end{equation}
where $a=a(x)$ and $b=b(x)$ are free functions. The $\pm$ formalism indicates that we are simultaneously considering both families with a positive and a negative sign in the argument of the exponential.

We point out that $\partial_\ve \dx_\ve\rvert_{\ve=0}=(\partial_\ve x_\ve\rvert_{\ve=0})^{\dt}$.  
Hence, using the family \eqref{eq:pert} and the constraint \eqref{eq:constrL}, expression \eqref{eq:nn} becomes
\begin{equation}\label{eq:I1}
I_\pm=a\dx e^{\vp}\eb-\int_{t_0}^t\!\Big\{\!\left(\tfrac{1}{2}a\vp'+a'\right)e^{\vp}\dx^2\pm a b e^{\vp}\dx-aBe^{\vp}\Big\}\,\eb ds\,.
\end{equation}

Formally, expression \eqref{eq:I1} exhibits a double integration that we would like to remove. In addition, as argued in the introduction, we want to avoid the dependence of integrands on derivatives of $x$. A possible approach in this direction is to reformulate  the integrand in $ds$ as a total derivative. In virtue of this, we neglect the term proportional to $\dx^2$ by imposing $\tfrac{1}{2}a\vp'+a'=0$, which gives
\begin{equation}\label{eq:a}
a=e^{-\vp/2}\,.
\end{equation}
Then, we  assume that $B\equiv b\B$ for some $\B=\B(x)$. 
Therefore, a direct computation with expression \eqref{eq:I1} gives
\begin{equation}\label{eq:I2}
I_\pm=a\dx e^{\vp}\eb-\int_{t_0}^t\!\Big\{\!(\pm a b \dx e^{\vp})\,\eb+(\mp a\B e^{\vp})\big(\eb\big)^{\dt}\Big\}\, ds\,.
\end{equation}
If we impose $\pm a b \dx e^{\vp}=(\mp a\B e^{\vp})^{\dt}$, which can be expanded as
\begin{equation}\label{eq:b}
b=-\tfrac{1}{2}\vp'\B-\B'\,,
\end{equation}
expression \eqref{eq:I2} becomes
\begin{equation}\label{eq:I3}
I_\pm=a\dx e^{\vp}\eb\pm\int_{t_0}^t\!\big(a\B e^{\vp}\eb\big)^{\dt}ds\,.
\end{equation}

We evaluate the integral in the second term of expression \eqref{eq:I3}. Hence, substituting expressions \eqref{eq:a} and \eqref{eq:b}, we finally get
\begin{equation}\label{eq:I4}
I_\pm=(\dx\pm\B)\,e^{\vp/2}e^{\mp\!\int_t(\vp'\!\B/2+\B')d\tau}\,.
\end{equation}

Note that expression \eqref{eq:I4} is a nonlocal constant for the autonomous Jacobi equation \eqref{eq:reom}, but it still depends on $\B$. 
By construction, equation \eqref{eq:b} constraints $\B$ as a function of $B$ and $\vp$. 
So, multiplying equation \eqref{eq:b} by $\B$ and defining $y\equiv\B^2$, we get the first-order linear ODE 
\begin{equation}\label{eq:lopr}
y'+\vp'y+2B=0\,.
\end{equation}
It is easy to see that equation \eqref{eq:lopr} admits $\B=\sqrt{y}$ as solution, with
\begin{equation}
\label{eq:rBbar}
\B=\sqrt{-2e^{-\vp}\smallint_xe^\vp B\,dx}
=\sqrt{2\delta_2 e^{-\vp}}\,.
\end{equation}
\begin{theorem}\label{teo:xxx}
Let $e^{\vp}B=-\delta_2'$ for some $\delta_2$. Then, $\mathcal{I}$ is a first integral for the autonomous Jacobi equation $\ddx+\tfrac{1}{2}\vp'\dx^2+B=0$ with
\begin{equation}\label{eq:I}
\mathcal{I}=\tfrac{1}{2}\dx^2e^{\vp}-\delta_2\,.
\end{equation}
\end{theorem}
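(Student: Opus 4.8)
The plan is to read off $\mathcal{I}$ as the product of the two nonlocal constants already constructed in \eqref{eq:I4}. The $\pm$ families in \eqref{eq:pert} were set up precisely so that the history-dependent exponentials in $I_+$ and $I_-$ carry opposite signs, so I would first form
$$I_+I_-=(\dx+\B)(\dx-\B)\,e^{\vp}\,e^{-\!\int_t\!g\,d\tau}\,e^{+\!\int_t\!g\,d\tau},\qquad g\equiv\tfrac{1}{2}\vp'\B+\B',$$
and observe that the two integral factors are mutual reciprocals and cancel identically. This leaves the point function $I_+I_-=(\dx^2-\B^2)\,e^{\vp}$, which is constant along solutions because each of $I_+$ and $I_-$ is, by Theorem \ref{teo:nonloconst}.

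The second step is to remove the auxiliary function $\B$. Substituting $\B^2=2\delta_2 e^{-\vp}$ from \eqref{eq:rBbar} gives $I_+I_-=\dx^2 e^{\vp}-2\delta_2$, so that $\mathcal{I}=\tfrac{1}{2}I_+I_-$ coincides with \eqref{eq:I}. The key point is that, unlike \eqref{eq:I4}, the resulting expression contains no surviving integral: $\mathcal{I}$ is a genuine point function of $(x,\dx)$, hence a true first integral and not merely a nonlocal constant.

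I would back this up with a direct verification that bypasses the perturbation machinery entirely. Differentiating along a solution gives $\dot{\mathcal{I}}=e^{\vp}\dx\ddx+\tfrac{1}{2}\vp' e^{\vp}\dx^3-\delta_2'\dx$; inserting $\ddx=-\tfrac{1}{2}\vp'\dx^2-B$ from \eqref{eq:reom} cancels the two cubic terms and leaves $\dot{\mathcal{I}}=-\dx\,(e^{\vp}B+\delta_2')$, which vanishes exactly by the hypothesis $e^{\vp}B=-\delta_2'$. This two-line computation confirms that the integrability condition in \eqref{eq:constrL} is the only thing the theorem really needs.

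The main obstacle I anticipate is the bookkeeping around $\B$, which is defined only implicitly through \eqref{eq:b}--\eqref{eq:lopr} and as a square root in \eqref{eq:rBbar}: for the product route I must check that the identity $\B^2=2\delta_2 e^{-\vp}$ is consistent with $\delta_2=-\smallint_x e^{\vp}B\,dx$ coming from \eqref{eq:constrL}, and that neither the cancellation in $I_+I_-$ nor the final point function depends on the branch of the square root or on the integration constant hidden in $\delta_2$. Reassuringly, the direct differentiation avoids $\B$ altogether, so it serves both as the cleanest proof and as a check that the $\B$-dependence genuinely drops out.
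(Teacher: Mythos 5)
Your proposal is correct, and its first half is exactly the paper's own proof: the paper likewise defines $\mathcal{I}\equiv\tfrac{1}{2}I_+I_-$, lets the two opposite exponential factors in \eqref{eq:I4} cancel to leave $\tfrac{1}{2}(\dx^2-\B^2)e^{\vp}$, and then substitutes \eqref{eq:rBbar}. What you add beyond the paper is the direct verification, and this is a genuine improvement rather than a mere sanity check. The product route inherits from the construction in Section \ref{sec:aut} the assumptions that $\B=\sqrt{2\delta_2 e^{-\vp}}$ exists as a real-valued function (i.e. $\delta_2\geq 0$ along the motion) and that the factorization $B=b\B$ with $b$ given by \eqref{eq:b} is admissible; where $\delta_2<0$ the constants $I_\pm$ become complex-valued and the cancellation argument is only formal. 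Your two-line computation, $\dot{\mathcal{I}}=e^{\vp}\dx\ddx+\tfrac{1}{2}\vp' e^{\vp}\dx^3-\delta_2'\dx=-\dx\,(e^{\vp}B+\delta_2')=0$, shows the conclusion holds under nothing more than the stated hypothesis $e^{\vp}B=-\delta_2'$, for any sign of $\delta_2$ and independently of the branch of the square root or the integration constant hidden in $\delta_2$. So while the paper's proof explains where $\mathcal{I}$ \emph{comes from} (the nonlocal machinery), your differentiation is the cleanest and most general proof that it \emph{is} a first integral.
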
 
\begin{proof}
Use expression \eqref{eq:I4} to compute $\mathcal{I}\equiv\tfrac{1}{2} I_+I_-=\tfrac{1}{2}(\dx^2-\B^2)\,e^{\vp}$. Then, substitute expression \eqref{eq:rBbar} inside $\mathcal{I}$.
\end{proof}


\subsection{Application: autonomous Painlev\'{e}-Gambier equations}
\label{sec:oiu}

There are several equations of the \emph{Painlev\'{e}-Gambier} classification that belong to the autonomous Jacobi parametrization \eqref{eq:reom} (see, e.g. Refs. \cite{GUHA20103247,Dambrosi2021}).  For example, the Painlev\'{e}-Gambier equations {\small XVIII}, {\small XXI}, and {\small XXII} can be parametrized as
\begin{equation}\label{eq:Painleve}
\ddx-\tfrac{1}{2}\alpha x^{-1}\dx^2+\beta x^n=0\,,
\end{equation}
with some constant parameters $(\alpha,\beta,n)$, which are
\begin{eqnarray}\label{eq:coeffsPain}
(\alpha,\beta,n)_{\mbox{\tiny XVIII}}\!&=&\!\big(1,\!-4,2\big)\,,\nonumber\\
(\alpha,\beta,n)_{\mbox{\tiny XXI}}\!&=&\!\big(\tfrac{3}{2},\!-3,2\big)\,,\\
(\alpha,\beta,n)_{\mbox{\tiny XXII}}\!&=&\!\big(\tfrac{3}{2},1,0\big)\,.\nonumber
\end{eqnarray}

Let us first establish a comparison between equation \eqref{eq:Painleve} and equation \eqref{eq:reom} to infer that $\vp=-\alpha\ln x$ and $B=\beta x^n$. Then, from definition \eqref{eq:rBbar}, we get $\B=\sqrt{\beta \gamma}\, x^{(n+1)/2}$ with $\gamma\equiv 2/(\alpha-n-1)$. 

After direct computations,  expression \eqref{eq:I4} yields
\begin{equation}\label{eq:r1}
I_\pm=\big(\dx x^{-\alpha/2}\pm \sqrt{\beta \gamma}\, x^{-1/\gamma}\big)e^{\mp\sqrt{\beta/\gamma}\int_tx^{(n-1)/2}d\tau}\,,
\end{equation}
that is a nonlocal constant for Painlev\'{e}-Gambier parametrization \eqref{eq:Painleve}.

Similarly, Theorem \ref{teo:xxx} provides the first integral
\begin{equation}\label{eq:r2}
\mathcal{I}=\dx^2x^{-\alpha}-\beta \gamma x^{-2/\gamma}\,.
\end{equation}


When combined with coefficients \eqref{eq:coeffsPain}, our results \eqref{eq:r1} and \eqref{eq:r2} exactly recover from a non-local perspective the first integrals proposed case-by-case in Ref.  \cite{GUHA20103247}. 
We remark that the consistency of our machinery is fully confirmed even when applied to the whole \emph{Painlev\'{e}-Gambier} classification which falls into the autonomous Jacobi parametrization \eqref{eq:reom}.


\section{Non-autonomous Jacobi Equation}

Consider two reference parameters $\vp=\vp(t,x)$ and $B=B(t,x)$. In this section we study nonlocal constants and first integrals for the non-autonomous Jacobi equation
\begin{equation}\label{eq:eom}
\ddx+\tfrac{1}{2}\partial_x\vp\,\dx^2+\partial_t\vp \,\dx+B=0\,.
\end{equation}

In Ref. \cite{Nucci2008UsingAO} Nucci and Tamizhman derived equation \eqref{eq:eom} as the Euler-Lagrange equation \eqref{eq:E-L} for
\begin{equation}\label{eq:L}
L(t,x,\dx)\equiv\tfrac{1}{2}e^{\vp}\dx^2+\delta_1\dx+\delta_2\,,
\end{equation}
with $\delta_1=\delta_1(t,x)$ and $\delta_2=\delta_2(t,x)$ satisfying
\begin{equation}\label{eq:constrLL}
\partial_t\delta_1-\partial_x\delta_2=e^\vp B\,.
\end{equation}

Let us consider expression \eqref{eq:nonlocGZ}, that evaluated on the  Lagrangian \eqref{eq:L} gives
\begin{equation}\label{eq:xx}
I=\big(e^\vp \dx+\delta_1\big)\cdot \partial_\ve x_\ve\big\rvert_{\ve=0}
-\int_{t_0}^t \xi\, ds\,,
\end{equation}
where we defined $\xi=\xi(t,x)$ as
\begin{equation}\label{eq:XII}
\xi \equiv \big(\tfrac{1}{2}\p{x}\vp \,e^\vp\dx^2+\p{x}\delta_1\dx+\p{x}\delta_2\big)\cdot \partial_\ve x_\ve\big\rvert_{\ve=0}\,+\big(e^\vp \dx+\delta_1\big)\cdot \partial_\ve \dx_\ve\big\rvert_{\ve=0}\,.
\end{equation}

Taking inspiration from the autonomous case, we choose the $x$-shift family
\begin{equation}\label{eq:pertx}
x_\ve^\pm=x+\ve a\ebb\,,
\end{equation}
where $a=a(t,x)$ and $b=b_\pm(t,x)$ are  free functions. 

Notice that $\partial_\ve \dx_\ve\rvert_{\ve=0}=(\partial_\ve x_\ve\rvert_{\ve=0})^{\dt}$. Hence, applying  \eqref{eq:pertx} in expression \eqref{eq:XII} and using condition \eqref{eq:constrLL}, it follows that
\begin{multline}\label{eq:mkj}
\xi_\pm=
\big(a\delta_1\ebb\big)^{\dt}+
(\tfrac{1}{2}a\p{x}\vp+\p{x}a)e^\vp\dx^2\ebb\\
+\big\{\big[(\p{t}a\pm a b_\pm)\,e^\vp \dx\big]\ebb-(a e^\vp B)\,\ebb\big\}\,.
\end{multline}

Now, we want to transform $\xi_\pm$ into a total derivative.
For this purpose, the structure of expression \eqref{eq:mkj} suggests to begin neglecting the term proportional to $\dx^2$ by imposing $\tfrac{1}{2}a\p{x}\vp+\p{x}a=0$. Such condition is solved by
\begin{equation}\label{eq:at}
a=e^{-\vp/2}\,.
\end{equation}
Some algebraic manipulations rewrite expression \eqref{eq:mkj} as
\begin{equation}\label{eq:kll}
\xi_\pm\!=\!
\big(a\delta_1\ebb\big)^{\dt}
\!+\!\big\{\big[(\p{t}a\pm a b_\pm)\,e^\vp \dx\big]\ebb\!+\!(\mp a e^\vp \B_\pm)\,\big(\ebb\big)^{\dt}\big\}.
\end{equation}
where we assumed $B\equiv b_\pm\B_\pm$ for some $\B_\pm=\B_\pm(t,x)$. 
We impose that the free parameter $b_\pm$ satisfies the following constraint
\begin{equation}\label{eq:dd}
(\p{t}a\pm a b_\pm)\,e^\vp \dx=(\mp a e^\vp \B_\pm)^{\dt}\,,
\end{equation}
that finally transforms $\xi_\pm$ into a total derivative
\begin{equation}\label{eq:zxc}
\xi_\pm=\big[a\delta_1\ebb\mp(ae^\vp\B_\pm)\ebb\big]^{\dt}\,.
\end{equation}

Furthermore, using \eqref{eq:pertx}, it follows by \eqref{eq:zxc} that expression \eqref{eq:xx} is equivalent to
\begin{equation}\label{eq:ddd}
I_\pm=(\dx\pm\B_\pm)\,a e^\vp\ebb\,,
\end{equation}
This result depends on the $b_\pm$ and $\B_\pm$ parameters, which are unknown so far.

Consider condition \eqref{eq:dd} and expand it as follows
\begin{equation}\label{eq:lklk}
\mp(\p{t}a\pm a b_\pm)e^\vp \dx = \p{t}(a e^\vp \B_\pm)+\p{x}(a e^\vp \B_\pm)\dx\,.
\end{equation} 
Equation \eqref{eq:lklk} can be decomposed as
\begin{equation}
\label{eq:klk}
\begin{cases}
\p{t}(a e^\vp \B_\pm)=0\,,\\
\p{x}(a e^\vp \B_\pm)=\mp(\p{t}a\pm a b_\pm)e^\vp\,.
\end{cases}
\end{equation}
It is immediate to check that the first equation of \eqref{eq:klk} gives 
\begin{equation}\label{eq:k}
k_\pm\equiv a e^\vp \B_\pm\,,
\end{equation}
with $k_\pm=k_\pm(x)$ a time-independent function. 

On the other hand, using the mixed partials equality $\p{x}\p{t}k_\pm=\p{t}\p{x}k_\pm$, the second equation of \eqref{eq:klk} provides the key relation
\begin{equation}
\p{t}\big[(\p{t}a\pm a b_\pm)e^\vp\big]=0\,,
\end{equation}
that can be expanded as
\begin{equation}\label{eq:ssdf}
a\,\p{t}b_\pm+(\p{t}\vp\, a+\p{t}a)\,b_\pm\pm(\p{t}\vp\p{t}a+\p{t}\p{t}a)=0\,.
\end{equation}

We note that \eqref{eq:ssdf} is a first-order (linear) Partial Differential Equation (PDE) for $b_\pm$.  We solve equation \eqref{eq:ssdf} using standard techniques and impose condition \eqref{eq:klk} to fix the integration constant. So, after several computations, we get
\begin{equation}\label{eq:fgh}
b_\pm=\mp\,\p{t}a-e^{-\vp}a^{-1}\p{x}k_\pm\,.
\end{equation}

Since $B\equiv b_\pm\B_\pm$, we multiply expression \eqref{eq:fgh} by $\B$ and use definition \eqref{eq:k} to substitute $k_\pm$. Consequently, expression simplifies to
\begin{equation}\label{eq:kjl}
\B_\pm\p{x}\B_\pm+\p{x}(\ln a+\vp)\B_\pm^2\pm(\p{t}\ln a) \B_\pm=-B\,,
\end{equation}
that finally constraints $\B_\pm$ in terms of the reference parameters $B$ and $\vp$.

In contrast with the autonomous case, equation \eqref{eq:kjl} is a hard-to-solve nonlinear PDE for $\B_\pm$. 
We choose to proceed by splitting our study depending on wether $\p{t}\vp$ (more precisely $\p{t}\ln a=-\tfrac{1}{2}\p{t}\vp$) is equal to zero or not.
  
  
\subsection{Case $\p{t}\vp=0$}
\label{eq:pt0}

If $\p{t}\vp=0$, equation \eqref{eq:kjl} becomes
\begin{equation}\label{eq:ghj}
\B_\pm\p{x}\B_\pm+\p{x}(\ln a+\vp)\B_\pm^2=-B\,.
\end{equation}
As in the autonomous case, substituting definition \eqref{eq:at} in equation \eqref{eq:ghj} and assuming $y_\pm\equiv\B_\pm^2$, we get the first-order linear PDE 
\begin{equation}
\p{x} y_\pm+(\p{x}\vp) y_\pm+2B=0\,, 
\end{equation}
which admits $\B\equiv\B_\pm=\sqrt{y_\pm}$ as solution, with
\begin{equation}\label{eq:rBbare}
\B=\sqrt{-2e^{-\vp}\smallint^x_{x_0}(e^\vp B)\rvert_{t}\,dx}=
\sqrt{2e^{-\vp}\smallint^x_{x_0}(\p{x}\delta_2-\p{t}\delta_1)\rvert_{t}\,dx}\,.
\end{equation}

In a similar way, being $\p{t}\vp=0$, equation \eqref{eq:fgh} can be rewritten as
\begin{equation}\label{eq:lkiqw}
b_\pm=e^{-\vp}a^{-1}\p{x}k_\pm\,.
\end{equation}
Hence, applying expressions \eqref{eq:rBbare} and \eqref{eq:k} in \eqref{eq:lkiqw}, it turns out that
\begin{equation}\label{eq:lkop}
b\equiv b_\pm=-\tfrac{1}{2}\p{x}\vp-\p{x}\B\,.
\end{equation}

Therefore, expressions \eqref{eq:rBbare} and \eqref{eq:lkop} can be used to evaluate the nonlocal constant \eqref{eq:ddd}, that becomes
\begin{equation}\label{eq:ddddsd}
I_\pm=(\dx\pm\B)\, e^{\vp/2}e^{\mp\int_t(\p{x}\vp/2+\p{x}\B)d\tau}\,.
\end{equation}

\begin{theorem}\label{teo:xxffx}
Let $\p{t}\vp=0$ and $e^\vp B=\p{x}(\partial_t\eta-\delta_2)$ for some $\eta$ and $\delta_2$. Then, $\mathcal{I}$ is  constant along the solutions of the Jacobi equation $\ddx+\tfrac{1}{2}\partial_x\vp\,\dx^2+B=0$ with
\begin{equation}\label{eq:Il}
\mathcal{I}\equiv\tfrac{1}{2}\dx^2e^{\vp}+\partial_t\eta-\delta_2
-\smallint^t_{t_0}\p{t}(\partial_t\eta-\delta_2)\,dt
\end{equation}
\end{theorem}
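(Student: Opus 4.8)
The plan is to verify directly that $\mathcal{I}$ is constant along every solution, i.e.\ that its total $t$-derivative vanishes whenever the equation of motion holds. It is convenient to abbreviate $F\equiv\partial_t\eta-\delta_2$, so that the hypothesis reads $e^\vp B=\p{x}F$, and to read the nonlocal term $\smallint^t_{t_0}\p{t}F\,dt$ as an integration carried out along the solution $t\mapsto x(t)$; with this reading its total $t$-derivative is simply $\p{t}F$ evaluated at $(t,x(t))$. Since $\p{t}\vp=0$ we have $\vp=\vp(x)$ and the equation of motion gives $\ddx+\tfrac{1}{2}\p{x}\vp\,\dx^2=-B$.

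First I would differentiate the local part. Using $\big(e^\vp\big)^{\dt}=e^\vp\p{x}\vp\,\dx$ together with the equation of motion,
\begin{equation*}
\big(\tfrac{1}{2}\dx^2e^\vp\big)^{\dt}=e^\vp\dx\big(\ddx+\tfrac{1}{2}\p{x}\vp\,\dx^2\big)=-e^\vp B\,\dx=-\p{x}F\,\dx\,,
\end{equation*}
where the last equality is exactly the hypothesis $e^\vp B=\p{x}F$. Next, differentiating $F=F(t,x(t))$ along the solution gives $\dot F=\p{t}F+\p{x}F\,\dx$, while the nonlocal term contributes $-\p{t}F$. Adding the three contributions, the two $\p{x}F\,\dx$ terms cancel against each other and the two $\p{t}F$ terms cancel as well, so that $\tfrac{d}{dt}\mathcal{I}=-\p{x}F\,\dx+(\p{t}F+\p{x}F\,\dx)-\p{t}F=0$, which proves the claim.

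This computation also clarifies the role of the construction leading to \eqref{eq:ddddsd}, and where the novelty relative to the autonomous case lies. Forming $\tfrac{1}{2}I_+I_-$ exactly as in Theorem \ref{teo:xxx} reproduces the local combination $\tfrac{1}{2}(\dx^2-\B^2)e^\vp$, and by \eqref{eq:rBbare} one has $\tfrac{1}{2}\B^2e^\vp=-\smallint^x_{x_0}(e^\vp B)\rvert_t\,dx=-\big(F-F\rvert_{x_0}\big)$, which recovers precisely the local part $\tfrac{1}{2}\dx^2e^\vp+F$ of $\mathcal{I}$. The essential new feature, and the only genuine obstacle, is that here $\B$ (equivalently $F$) depends explicitly on $t$, so that, unlike in the autonomous setting, this local combination is no longer conserved: its drift along solutions is exactly $\p{t}F$. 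The subtraction $-\smallint^t_{t_0}\p{t}F\,dt$ is introduced precisely to absorb this drift, and the one point that must be handled with care is the bookkeeping of this term — reading it as an integral along the trajectory, so that its derivative is $\p{t}F(t,x(t))$ and cancels the drift produced by the local part.
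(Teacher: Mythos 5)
Your proof is correct, and it takes a genuinely different route from the paper's. The paper never differentiates $\mathcal{I}$: it forms the product $\mathcal{I}=\tfrac{1}{2}I_+I_-$ of the two nonlocal constants \eqref{eq:ddddsd} produced by the perturbation machinery (Theorem \ref{teo:nonloconst} applied with the $x$-shift family), so that the exponential factors cancel and $\mathcal{I}=\tfrac{1}{2}(\dx^2-\B^2)e^{\vp}$, and then uses the hypothesis $e^\vp B=\p{x}(\p{t}\eta-\delta_2)$ to rewrite $\B^2$ from \eqref{eq:rBbare}, trading the $x$-integral at fixed $t$ for the $t$-integral that appears in \eqref{eq:Il} (step \eqref{eq:lol}). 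You instead verify the claim directly: with $F\equiv\p{t}\eta-\delta_2$, the equation of motion gives $\big(\tfrac{1}{2}\dx^2e^\vp\big)^{\dt}=-\p{x}F\,\dx$, and this cancels against $\dot F=\p{t}F+\p{x}F\,\dx$ and the derivative $-\p{t}F$ of the nonlocal term. Your approach is more elementary and, importantly, more robust: it isolates the one delicate point, namely that $\smallint^t_{t_0}\p{t}F\,dt$ must be read as an integral along the trajectory $s\mapsto(s,x(s))$ (which is indeed the paper's intended reading, as the applications with $\smallint_{t_0}^t x\,dt$ show). That is exactly where the paper's own chain of equalities is fragile: the constancy of the individual $I_\pm$ rests on conditions \eqref{eq:klk}, which in this subsection force $\p{t}\B=0$ and sit uneasily with the time-dependent $\B$ of \eqref{eq:rBbare}, and the identification in \eqref{eq:lol} is precisely where the fixed-$x_0$ versus along-the-trajectory reading matters; your computation bypasses both issues and certifies the statement as written. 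What the paper's route buys, and a bare verification does not, is the provenance of formula \eqref{eq:Il} within the nonlocal-constant framework; your closing discussion, identifying the drift $\p{t}F$ of the local part $\tfrac{1}{2}\dx^2e^\vp+F$ and the role of the subtraction in absorbing it, correctly reconstructs that bridge (modulo the innocuous $t$-dependent term $F\rvert_{x_0}$, which is exactly the bookkeeping ambiguity just mentioned).
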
 
\begin{proof}
Use expression \eqref{eq:ddddsd} to compute $\mathcal{I}\equiv\tfrac{1}{2} I_+I_-=\tfrac{1}{2}(\dx^2-\B^2)\,e^{\vp}$. Define $\delta_1\equiv \p{x}\eta$ and consider expression \eqref{eq:rBbare}, that can be rewritten as 
\begin{eqnarray}
\B^2&=&2e^{-\vp}\smallint^x_{x_0}\p{x}\big(\delta_2-\p{t}\eta\big)\rvert_{t}\,dx\,,\\
\label{eq:lol}
&=&2e^{-\vp}\big[\delta_2-\p{t}\eta-\smallint^t_{t_0}\p{t}\big(\delta_2-\p{t}\eta\big)\,dt\big]\,.
\end{eqnarray}
Finally, substitute expression \eqref{eq:lol} inside $\mathcal{I}$.
\end{proof}
Notice that, as expected, the integrand of \eqref{eq:Il} depends only on $x$ and $t$. In addition, when evaluated in the autonomous case, expression \eqref{eq:Il} exactly recovers our previous result \eqref{eq:I}.

\begin{remark}
One might wonder under what conditions expression \eqref{eq:Il} becomes a true first integral, in the sense of a local function of $t$. For this purpose, it is sufficient to require that $\p{t}(\partial_t\eta-\delta_2)=\dot\psi$ for some $\psi=\psi(t)$. Derive such condition with respect to $x$, switch partial derivatives, and use the hypothesis $e^\vp B=\p{x}(\partial_t\eta-\delta_2)$ of Theorem \ref{teo:xxffx} to finally get
$0=\p{t}(e^\vp B)=e^\vp \p{t}B$, which implies $\p{t}B=0$. Since in this subsection we also assume $\p{t}\vp=0$, expression \eqref{eq:Il} returns a true first integral only if we move to the autonomous case.
\end{remark}


\subsubsection{Application: non-autonomous Painlev\'{e}-Gambier equations}
\label{sec:lkjhg}

There are several equations of the \emph{Painlev\'{e}-Gambier} classification which belong to the Jacobi parametrization \eqref{eq:eom} (see, e.g. Refs. \cite{GUHA20103247,Dambrosi2021}).  
For example, the Painlev\'{e}-Gambier equation {\small IV} is
\begin{equation}\label{eq:Painlevesd}
\ddx-(6x^2+t)=0\,.
\end{equation}

Let us first establish a comparison between equation \eqref{eq:Painlevesd} and Theorem \ref{teo:xxffx} to infer that $\vp=0$ and $B=-(6x^2+t)$. Since $e^\vp B=-(6x^2+t)$, we can chose $\eta=-2x^3t$ and $\delta_2=tx$. Then, expression \eqref{eq:Il} provides the nonlocal  constant
\begin{equation}
\mathcal{I}=\tfrac{1}{2}\dx^2-2x^3-xt+\smallint_{t_0}^t x\,dt\,.
\end{equation}

To give an another example, the Painlev\'{e}-Gambier equation {\small XX} is
\begin{equation}\label{eq:Painlevesgfgfd}
\ddx-\tfrac{1}{2}x^{-1}\dx^2-4x^2-2tx=0\,.
\end{equation}

A comparison between equation \eqref{eq:Painlevesgfgfd} and Theorem \ref{teo:xxffx} yields $\vp=-\ln x$ and $B=-2x(2x+t)$. Since $e^\vp B=-2(2x+t)$, we can chose $\eta=-t^2x$ and $\delta_2=2x^2$. Then, expression \eqref{eq:Il} yields
\begin{equation}
\mathcal{I}=\tfrac{1}{2}\dx^2x^{-1}-2x(x+t)-2\smallint_{t_0}^t x\,dt\,.
\end{equation}
We remark that the consistency of our Theorem \ref{teo:xxffx} is also fully confirmed when applied to the whole \emph{Painlev\'{e}-Gambier} classification which falls into the Jacobi parametrization \eqref{eq:eom} with $\p{t}\vp=0$. 


\subsection{Case $\p{t}\vp\ne0$}
\label{eq:lowe}

If $\p{t}\vp\ne0$, equation \eqref{eq:kjl} is a non-linear PDE that doesn't admit an explicit solution.
Some manipulations are therefore necessary to proceed.

Multiply equation \eqref{eq:kjl} by $e^{2(\ln a+\vp)}$ and apply definition \eqref{eq:k} to get
\begin{equation}\label{eq:lki}
\p{x}k_\pm^2\pm2\p{t}a e^\vp k_\pm=-2 B a^2 e^{2\vp}\,.
\end{equation}
Take the time derivative $\partial_t$ of equation \eqref{eq:lki} and switch $\p{t}\p{x}k^2_\pm=\p{x}\p{t}k^2_\pm$
\begin{equation}\label{eq:lopw}
\p{x}\p{t}k^2_\pm\pm2\p{t}(\p{t}a e^\vp) k_\pm\pm2\p{t}a e^\vp (\p{t} k_\pm)=-2\p{t}(B a^2 e^{2\vp})\,.
\end{equation}
Furthermore, since $\p{t}k^2_\pm=2k_\pm\p{t}k_\pm=0$ (remember that expression \eqref{eq:klk} fixes $\p{t}k_\pm=0$),  from equation \eqref{eq:lopw} it follows that 
\begin{equation}\label{eq:swe}
\p{t}(\p{t}a\, e^\vp) k_\pm=\mp\p{t}(B a^2 e^{2\vp})\,,
\end{equation}
which can be simply solved with respect to $k_\mp$, obtaining therefore
\begin{equation}\label{eq:klo}
k_\pm=\mp\frac{\p{t}(B a^2 e^{2\vp})}{\p{t}(\p{t}a\, e^\vp)}\,.
\end{equation}
We remember that equation \eqref{eq:klo} must satisfy conditions in \eqref{eq:klk}.

By definition \eqref{eq:k}, $\B_\pm= a^{-1} e^{-\vp}k_\pm$. Hence, replacing $k_\pm$ and $a$ in the right hand side of \eqref{eq:klo} with expressions \eqref{eq:klo} and \eqref{eq:at} respectively, we finally obtain
\begin{equation}\label{eq:lopopo}
\B_\pm=\pm4\frac{\p{t}B+B\p{t}\vp}{2\p{t}\p{t}\vp+(\p{t}\vp)^2}\,.
\end{equation}
On the other hand,  we can use expression \eqref{eq:lopopo} to calculate $b_\pm=B/\B_\pm$. Hence,
\begin{equation}\label{eq:lnddb}
b_\pm=\pm\frac{1}{4}\frac{2\p{t}\p{t}\vp+(\p{t}\vp)^2}{\p{t}\ln B+\p{t}\vp}\,.
\end{equation}
This leads to our new result for this section.
\begin{theorem}\label{teo:xxkjkffx}
Let $\p{t}\vp\ne0$ and $Be^\vp=\rho_1 e^{\vp/2}+\rho_2$ for some $\rho_{1,2}=\rho_{1,2}(x)$ such that
\begin{equation}\label{eq:rr}
\rho_1'=\p{t}(\ln\p{t}\vp)\big(e^{\vp/2}+\rho_2/\rho_1\big)\,.
\end{equation}
Then, $I$ is constant along the solutions of the Jacobi equation $\ddx+\tfrac{1}{2}\partial_x\vp\,\dx^2+\partial_t\vp \,\dx+B=0$ with
\begin{equation}\label{eq:lkj}
\!I\!=\!\left(\!\dx e^{\vp/2}\!+\!\frac{2\rho_1}{\p{t}(2\ln \p{t}\vp\!+\!\vp)}\!\right)\exp\left\{\frac{1}{2}\!\int_t 
\p{t}(2\ln \p{t}\vp\!+\!\vp)\!\left(\!1\!+\!\frac{\rho_2}{\rho_1}e^{-\vp/2}\!\right)\!d\tau\right\}
\end{equation}
\end{theorem}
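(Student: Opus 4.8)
The plan is to obtain \eqref{eq:lkj} directly from the reduced nonlocal constant \eqref{eq:ddd}, exactly as in the proofs of Theorems~\ref{teo:xxx} and \ref{teo:xxffx}, and then to argue that hypothesis \eqref{eq:rr} is precisely what makes the construction legitimate. Recall that, once \eqref{eq:at} fixes $a=e^{-\vp/2}$, expression \eqref{eq:ddd} reads $I_\pm=(\dx\pm\B_\pm)\,e^{\vp/2}\,\ebb$, and that by Theorem~\ref{teo:nonloconst} this is a nonlocal constant as soon as the conditions \eqref{eq:klk} hold, so that \eqref{eq:dd} turns $\xi_\pm$ into the total derivative \eqref{eq:zxc}. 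The forced expressions for the two free functions are \eqref{eq:lopopo} and \eqref{eq:lnddb}; thus the whole proof reduces to substituting these into $I_\pm$ and simplifying.

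First I would record the elementary identity $2\p{t}\p{t}\vp+(\p{t}\vp)^2=\p{t}\vp\,\p{t}(2\ln\p{t}\vp+\vp)$, which rewrites the denominators appearing in \eqref{eq:lopopo} and \eqref{eq:lnddb}. Next, using the hypothesis $Be^{\vp}=\rho_1e^{\vp/2}+\rho_2$ with $\rho_{1,2}=\rho_{1,2}(x)$, a one-line computation gives $\p{t}(Be^{\vp})=\tfrac12\rho_1\,\p{t}\vp\,e^{\vp/2}$, hence $\p{t}B+B\p{t}\vp=\tfrac12\rho_1\,\p{t}\vp\,e^{-\vp/2}$. Feeding these into \eqref{eq:lopopo} and \eqref{eq:lnddb} collapses them to the factored forms $\B_\pm=\pm\frac{2\rho_1e^{-\vp/2}}{\p{t}(2\ln\p{t}\vp+\vp)}$ and $\pm b_\pm=\tfrac12\,\p{t}(2\ln\p{t}\vp+\vp)\,(1+\tfrac{\rho_2}{\rho_1}e^{-\vp/2})$.

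I would then assemble $I_\pm$. In the prefactor, $\pm\B_\pm e^{\vp/2}=\frac{2\rho_1}{\p{t}(2\ln\p{t}\vp+\vp)}$ is manifestly sign-independent, so the bracket becomes $\dx e^{\vp/2}+\frac{2\rho_1}{\p{t}(2\ln\p{t}\vp+\vp)}$; in the exponent, $\pm\int_t b_\pm\,d\tau=\tfrac12\int_t\p{t}(2\ln\p{t}\vp+\vp)(1+\tfrac{\rho_2}{\rho_1}e^{-\vp/2})\,d\tau$ is likewise sign-independent. Consequently $I_+$ and $I_-$ coincide and equal exactly the expression \eqref{eq:lkj}, which proves the formula modulo the legitimacy of the construction.

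The hard part is this last point: I must check that the forced $\B_\pm$ of \eqref{eq:lopopo} genuinely satisfies the \emph{un-differentiated} constraint \eqref{eq:kjl} (equivalently \eqref{eq:lki}) together with the time-independence $\p{t}(ae^{\vp}\B_\pm)=0$ demanded by \eqref{eq:klk} --- the time-differentiation step leading to \eqref{eq:klo} being irreversible, it only guarantees a necessary condition. Substituting the factored $\B_\pm$ into \eqref{eq:kjl} and reducing with $\rho_{1,2}=\rho_{1,2}(x)$, I expect the residual compatibility to collapse precisely to \eqref{eq:rr}, namely $\rho_1'=\p{t}(\ln\p{t}\vp)(e^{\vp/2}+\rho_2/\rho_1)$; note that, since $\rho_1'$ is a function of $x$ only, \eqref{eq:rr} also forces its right-hand side to be time-independent, which is what secures $k_\pm=k_\pm(x)$. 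This algebraic reduction is the main obstacle and is where the ``several computations'' are concentrated; once it is in place, \eqref{eq:dd} holds, $\xi_\pm$ is the total derivative \eqref{eq:zxc}, and Theorem~\ref{teo:nonloconst} delivers the constancy of $I$ along the solutions.
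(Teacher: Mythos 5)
Your first four paragraphs are correct and follow the same route as the paper's own proof: from $Be^\vp=\rho_1e^{\vp/2}+\rho_2$ one gets $\p{t}B+B\p{t}\vp=\tfrac{1}{2}\rho_1\p{t}\vp\,e^{-\vp/2}$, hence the factored forms $\B_\pm=\pm 2\rho_1 e^{-\vp/2}/\Phi$ and $\pm b_\pm=\tfrac{1}{2}\Phi\big(1+\tfrac{\rho_2}{\rho_1}e^{-\vp/2}\big)$ with $\Phi\equiv\p{t}(2\ln\p{t}\vp+\vp)$, and substitution into \eqref{eq:ddd} yields $I_+=I_-$ equal to \eqref{eq:lkj}. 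The genuine gap sits exactly where you placed your hedge: you never carry out the compatibility check, and your expectation that it ``collapses precisely to \eqref{eq:rr}'' is false. Your own factorization already shows $k_\pm=ae^\vp\B_\pm=\pm 2\rho_1/\Phi$ (not $\pm\rho_1$, as the paper's proof asserts), so the first condition of \eqref{eq:klk} requires $\p{t}\Phi=0$; given \eqref{eq:rr} one computes $\p{t}\Phi=\rho_1\rho_1'\rho_2\,\p{t}\vp/(\rho_1e^{\vp/2}+\rho_2)^2$, so this holds precisely when $\rho_1'\rho_2=0$, which the hypotheses do not grant. Moreover the second condition of \eqref{eq:klk}, after using \eqref{eq:rr}, reads $\p{x}\big(2\rho_1/\Phi\big)=-\big[\rho_1'+\tfrac{1}{2}\p{t}\vp\,\rho_2/\rho_1\big]$: a constraint on the $x$-dependence of $\vp$ that is independent of \eqref{eq:rr} (which resembles it only after flipping a sign and dropping a term). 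So the conditions legitimizing the construction do not reduce to \eqref{eq:rr}, and no algebra will make them do so.

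In fact the statement with only the stated hypotheses is false, so your step cannot be completed. Take $\vp=2\ln(1+e^t)$ and $B=x/(1+e^t)$ on a region with $x>0$: then $\p{t}\vp=2e^t/(1+e^t)\ne 0$, $Be^\vp=x\,e^{\vp/2}$, so $\rho_1=x$, $\rho_2=0$, and \eqref{eq:rr} holds because $\p{t}(\ln\p{t}\vp)\,e^{\vp/2}=\tfrac{1}{1+e^t}(1+e^t)=1=\rho_1'$. Here $\Phi=2$, the Jacobi equation is $\ddx+\tfrac{2e^t}{1+e^t}\dx+\tfrac{x}{1+e^t}=0$, and \eqref{eq:lkj} gives $I=\big(\dx(1+e^t)+x\big)e^{t-t_0}$, whose derivative along solutions is $\dot I=2\dx\,e^{t-t_0}\not\equiv 0$. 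Note that here $\rho_2=0$, so the first condition of \eqref{eq:klk} does hold; it is the second condition (equivalently the un-differentiated constraint \eqref{eq:kjl}, whose left side evaluates to $x(1-e^t)/(1+e^t)^2\ne-B$) that fails, confirming it is an extra, independent hypothesis. Your instinct that all the real work is concentrated in this verification was sound --- the paper's proof makes the same leap, via the inaccurate identification $k_\pm=\pm\rho_1$ and a misreading of the second condition of \eqref{eq:klk} as ``$\p{x}k_\pm=0$'' --- but an honest completion of your last step shows the reduction fails rather than succeeds, and the theorem can only be salvaged by adding the conditions $\p{t}\Phi=0$ and $\p{x}(2\rho_1/\Phi)=-\rho_1'-\tfrac12\p{t}\vp\,\rho_2/\rho_1$ to its hypotheses.
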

\begin{proof}
Substitute \eqref{eq:at} in expression \eqref{eq:klo} and impose $\p{t}{k_\pm}=0$ (see the first condition of \eqref{eq:klk}). 
The obtained equation is $\p{t}{(B e^\vp})=\rho_1\p{t}e^{\vp/2}$, which is solved by $B=\rho_1 e^{-\vp/2}+\rho_2 e^{-\vp}$ (our first hypothesis). Compute $B$ inside expressions \eqref{eq:lopopo} and \eqref{eq:lnddb}. Use the obtained expressions for $\B_\pm$ and $b_\pm$ inside definition \eqref{eq:ddd} to get our final result
\eqref{eq:lkj}. Note that in this case $I_+=I_-$, hence $I=I_\pm$. Since $k_\pm=\pm\rho_1$, our second hypothesis \eqref{eq:rr} is obtained from $\p{x}k_\pm=0$ (the second condition of \eqref{eq:klk}).
\end{proof}
Notice that, as expected, the integrand of \eqref{eq:lkj} depends only on $x$ and $t$ 

\begin{remark}\label{rem:df}
One might wonder under what conditions expression \eqref{eq:lkj} becomes a true first integral, in the sense of a local function of $t$. For this purpose, since the integrand depends only on $t$ and $x$, it is sufficient to require that $\p{t}(2\ln \p{t}\vp\!+\!\vp)\!\left(1\!+\!\rho_2/\rho_1e^{-\vp/2}\right)=\dot F(t)$ for some $F=F(t)$.
\end{remark}


\subsubsection{Application: non-autonomous Jacobi equation}
\label{eq:koprwrt}

Consider the following Jacobi-type equation
\begin{equation}\label{eq:jaerctot}
\ddx+\tfrac{1}{2}\dx^2+\dx+\varrho\, e^{-(t+x)/2}=0\,,
\end{equation}
with $\varrho$ a constant parameter. Since $B=\varrho\, e^{-(t+x)/2}$ and $\p{x}\vp=\p{t}\vp=1$, we have $\vp=t+x$. Hence, equation \eqref{eq:jaerctot} satisfies the hypotheses of Theorem \ref{teo:xxkjkffx} with $\rho_1=\varrho$ and $\rho_2=0$.  Note that Remark \ref{rem:df} is also satisfied by $F=t$. Hence, expression \eqref{eq:lkj} yields the following true first integral
\begin{equation}\label{eq:lpo}
\mathcal{I}=e^{\,t/2}\left(\dx e^{(t+x)/2}+2\varrho\right).
\end{equation}

Being linear in $\dx$, $\mathcal{I}$ turns out to be a precious tool to find a solution of equation \eqref{eq:jaerctot}. In fact, expression \eqref{eq:lpo} can be rewritten as $\dot{\mathcal{J}}(t,x)=0$, with
\begin{equation}\label{eq:kloi}
\mathcal{J}= 2 \,e^{\,x/2}+\tilde{\mathcal{I}} e^{-t}-4\varrho \,e^{-t/2}\,.
\end{equation}
Hence, $\mathcal{J}$ is a first integral too. After some algebraic manipulations, it can be easily checked that expression \eqref{eq:kloi} solves directly equation \eqref{eq:jaerctot} with
\begin{equation}
x(t)=2\ln\left\{2\varrho \,e^{-t/2}-\tfrac{1}{2}\,\tilde{\mathcal{I}}\, e^{-t}+\tilde{\mathcal{J}}\right\}\,.
\end{equation}
Here, the $\tilde{\mathcal{I}}$ and $\tilde{\mathcal{J}}$ parameters are constants.


\section{Acknowledgments}
\label{sec:ak}
The author would like to thank Professor Gaetano Zampieri for useful discussions.

\nocite{*}
\providecommand{\bysame}{\leavevmode\hbox to3em{\hrulefill}\thinspace}


\end{document}